\newtheorem{theorem}{Theorem}[section]
\newtheorem{lemma}[theorem]{Lemma}
\newtheorem{proposition}[theorem]{Proposition}
\newtheorem{remar}[theorem]{Remark}
\theoremstyle{definition}
\newtheorem{examp}[theorem]{Example}
\newtheorem{prob}[theorem]{Open Problem}
\newenvironment{example}{\begin{examp}\rm}{\diams\end{examp}}
\newcommand{\diams}{\unskip\nobreak\hfil\penalty50%
\hskip1em\hbox{}\nobreak\hfil%
$\diamondsuit$\parfillskip=0pt\finalhyphendemerits=0}
\newenvironment{remark}{\begin{remar}\rm}{\end{remar}}
\newcommand{\bfind}[1]{\index{#1}{\bf #1}}
\newcommand{\sn}{\par\smallskip\noindent}
\newcommand{\mn}{\par\medskip\noindent}
\newcommand{\bn}{\par\bigskip\noindent}
\newcommand{\pars}{\par\smallskip}
\newcommand{\chara}{\mbox{\rm char}\,}
\newcommand{\cO}{\mathcal{O}}
\newcommand{\Q}{\mathbb Q}
\newcommand{\F}{\mathbb F}
\begin{document}
\title[]{On algebraically maximal valued fields that are not defectless}
\author{Franz-Viktor Kuhlmann}
\date{25.5.2026}


\address{Institute of Mathematics, University of Szczecin,
ul. Wielkopolska 15,
70-451 Szczecin, Poland}
\email{fvk@usz.edu.pl}

\begin{abstract}\noindent
An example originally given by F.~Delon shows the existence of an algebraically maximal
discretely valued field of characteristic $p>0$ which admits purely inseparable
extensions of degree $p^2$ with defect $p$. These extensions are not generated by a
single element. Using a trick introduced in an earlier paper of the author, we construct
algebraically maximal valued fields, of characteristic $p$ as well as of characteristic
$0$, which admit separable extensions of degree $p^2$ with defect $p$. They are of rank
2 and it is an open question whether such examples having rank 1 exist.
\end{abstract}

\subjclass[2020]{12J10, 12J25}
\keywords{valued field extension, defect, defectless valued field, algebraically maximal valued field}

\maketitle

%
%
\section{Introduction}
The notions and notations we will use will be introduced in Section~\ref{sectprel}.
\pars
Fran\c{c}oise Delon gave an example that shows that algebraically maximal valued
fields are not necessarily defectless (see \cite{D}, Exemple~1.51). A corrected and
expanded version was presented in \cite[Example 3.25]{Ku31}. We reproduce it in
Section~\ref{sectbex} and fill a gap that appeared in its exposition in \cite{Ku31}.

\pars
For what follows, take a prime $p$. Example~\ref{exdelon} proves:
\begin{theorem}                           \label{amnotidl}
There exists a discretely valued algebraically maximal field $(L,v)$ of characteristic
$p>0$ which is not inseparably defectless and admits a purely inseparable
extension of degree $p$ which is not an algebraically maximal field. In particular,
the property ``algebraically maximal'' does not imply ``defectless''.
\end{theorem}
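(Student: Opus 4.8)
The proof amounts to producing the example. Following \cite{D} in the corrected form of \cite[Example~3.25]{Ku31}, which Section~\ref{sectbex} reproduces, I would construct a henselian, discretely valued field $(L_0,v_0)$ of characteristic $p$ with $v_0L_0\cong\Z$ and \emph{imperfect} residue field $k:=\ovl{L_0}$, together with two units $a,b\in\cO_{L_0}^{\times}$ such that $[L_0(a^{1/p},b^{1/p}):L_0]=p^2$, yet $a$ and $b$ picked so tightly that the extension $M:=L_0(a^{1/p},b^{1/p})$ satisfies $v_0M=v_0L_0$ and $[\ovl M:k]=p$; together with $[M:L_0]=p^2$ this forces the defect $d(M/L_0)=p$. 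The degree-$p$ intermediate field $L_1:=L_0(b^{1/p})$ will be the purely inseparable extension of $L_0$ that fails to be algebraically maximal.

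The construction must be set up so that the prescribed element $a$ is a limit of a pseudo Cauchy sequence of algebraic type over $L_1$ but not over $L_0$; concretely, $\sup\{v_0(a-c^p):c\in L_1\}=\infty$ while $\sup\{v_0(a-c^p):c\in L_0\}<\infty$. One arranges this by assembling $L_0$ from a power series field (or a directed union of such) over an imperfect $k$ of sufficiently large imperfection degree, prescribing $a$ and $b$ in advance so that the expansion of $a$ as an $L_0^p$-linear combination of $1,b,\dots,b^{p-1}$---a basis of $(L_1)^p=L_0^p(b)$ over $L_0^p$---only becomes available in the limit. Granting this, the verification runs as follows. First, $v_0$ is discrete and $(L_0,v_0)$ is henselian by construction. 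Second---the heart of the matter---$(L_0,v_0)$ is algebraically maximal: being henselian, it suffices that it have no proper immediate separable-algebraic extension (guaranteed because $L_0$ is built to realize the limits of all pseudo Cauchy sequences of algebraic type over it) and no proper immediate purely inseparable extension, i.e. that $v_0(a'-c^p)$ be bounded above over $c\in L_0$ for every $a'\in L_0\setminus L_0^p$---in particular for $a'=a$, so $a^{1/p}\notin L_0$. Third, $L_1=L_0(b^{1/p})$ is purely inseparable of degree $p$ over $L_0$; since $[M:L_1]=p$ we have $a^{1/p}\notin L_1$, and since $\sup\{v_0(a-c^p):c\in L_1\}=\infty$ the extension $M=L_1(a^{1/p})$ is immediate over $L_1$, hence of defect $p$. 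By multiplicativity of the defect, $d(M/L_0)=d(M/L_1)\cdot d(L_1/L_0)=p\cdot 1=p$ (the second factor is $1$ because $L_0$ is algebraically maximal, so $L_1/L_0$ cannot be immediate).

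The stated conclusions are then immediate: $M/L_0$ is a purely inseparable extension of nontrivial defect, so $(L_0,v_0)$ is not inseparably defectless, hence not defectless; $M$ is a proper immediate extension of $L_1=L_0(b^{1/p})$, so this purely inseparable degree-$p$ extension of $L_0$ is not algebraically maximal; and $(L_0,v_0)$ itself being algebraically maximal but not defectless shows that ``algebraically maximal'' does not imply ``defectless''. The real obstacle is the tension between the second and third steps above: $L_0$ must be algebraically maximal---and so must forbid \emph{every} $p$-th root of an element of $L_0$, $a^{1/p}$ included, from being approximated by $p$-th powers within $L_0$---while a single purely inseparable degree-$p$ extension already enlarges the relevant supply of $p$-th powers ($(L_1)^p=L_0^p(b)\supsetneq L_0^p$, and effectively so for the valuation) enough that $a^{1/p}$ becomes such a limit over $L_1$. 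Engineering this near miss while keeping $L_0$ henselian, discretely valued, algebraically maximal and with imperfect residue field is exactly what makes Delon's example delicate, and is the content of the verification in \cite[Example~3.25]{Ku31}.
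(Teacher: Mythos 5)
There is a genuine gap: you never actually produce the field whose existence is the whole content of the theorem. Your argument has the shape ``suppose $(L_0,v_0)$, $a$, $b$ can be constructed with properties (i)--(iv); then the theorem follows,'' and the conditional part (multiplicativity of the defect, $L_1|L_0$ not immediate because $L_0$ is algebraically maximal, $M|L_1$ immediate hence $L_1$ not algebraically maximal) is fine. But the construction itself is only gestured at (``assembling $L_0$ from a power series field \dots prescribing $a$ and $b$ in advance so that the expansion \dots only becomes available in the limit''), and you explicitly defer it, ending with ``Granting this'' and the admission that the engineering is ``the content of the verification in [Ku31, Example 3.25].'' Since the theorem is an existence statement proved by exhibiting an example, this is precisely the part that cannot be outsourced. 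The tension you correctly identify --- a field that realizes enough pseudo-Cauchy limits to be algebraically maximal yet fails to be inseparably defectless --- is the hard point, and your sketch gives no mechanism for resolving it; note in particular that a power series field itself is maximal, hence defectless, so one must pass to a carefully chosen proper subfield, which your outline does not do.

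For comparison, the paper's example resolves this quite differently from your blueprint. One takes $x,y\in\F_p((t))$ algebraically independent over $\F_p(t)$, sets $s:=x^p+ty^p$, $K:=\F_p(t,s)$, and lets $L_0$ be the relative algebraic closure of $K$ in $\F_p((t))$; the residue field is the perfect field $\F_p$, and the defect of the degree-$p^2$ purely inseparable extension $L_0^{1/p}=L_0(t^{1/p},s^{1/p})$ comes from value-group ramification ($(v_tL_0^{1/p}:v_tL_0)=p$, trivial residue extension, since $s^{1/p}=x+t^{1/p}y\in\F_p((t^{1/p}))$), not from a residue extension over an imperfect residue field as in your setup (your variant with $v_0M=v_0L_0$ and $[Mv_0:L_0v_0]=p$ would require an imperfect residue field and a different, unverified construction). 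Crucially, algebraic maximality of $L_0$ is not obtained by ``building in'' limits of pseudo-Cauchy sequences: it follows from the slick observation that $\F_p((t))$ is the completion of $L_0$, hence its unique maximal immediate extension, so any proper immediate algebraic extension of $L_0$ would embed over $L_0$ into $\F_p((t))$, contradicting the choice of $L_0$ as relatively algebraically closed there. That one argument is what makes the example work, and it has no counterpart in your proposal.
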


\pars
The question arises whether there are also examples of algebraically maximal fields
which admit separable (and hence simple) defect extensions. Using a trick already
employed in \cite[Example 3.18]{Ku31}, we will construct such examples in
Section~\ref{sectexrk2}, based on which we prove:
\begin{theorem}                           \label{amnotidl2}
There exists an algebraically maximal field $(L,v)$ with residue characteristic
$p>0$ which admits a separable extension of degree $p^2$ with defect
$p$, and with intermediate fields of degree $p$ over $L$ which are not algebraically
maximal fields. The field $L$ can be chosen of characteristic $0$ as well as
of characteristic $p$.
In particular, the property ``algebraically maximal'' does not imply
``separably defectless'' and is not preserved under finite separable extensions.
\end{theorem}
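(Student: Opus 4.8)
The plan is to start from the field $(L_0,v_0)$ supplied by Example~\ref{exdelon} and Theorem~\ref{amnotidl}: a discretely valued, algebraically maximal field of characteristic $p$ together with an element $c\in L_0\setminus L_0^p$ such that $L_0(c^{1/p})/L_0$ has degree $p$ and is \emph{not} algebraically maximal, while $L_0(c^{1/p^2})/L_0(c^{1/p})$ is immediate of defect $p$; thus $L_0(c^{1/p^2})/L_0$ is purely inseparable of degree $p^2$ and defect $p$, carried by the unique intermediate field $L_0(c^{1/p})$, which is ramified or residue-extending of degree $p$ over $L_0$. The goal is to replace this purely inseparable tower by a separable one over a field that is still algebraically maximal.

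For the characteristic $(p,p)$ part I would use the trick of \cite[Example 3.18]{Ku31}. Adjoin a transcendental element $t$ to $L_0$ and extend $v_0$ to a valuation $v$ on $L_0(t)$ that leaves the value group unchanged --- so $vL_0(t)=vL_0=\Z$ and $L_0(t)v$ is a rational function field over $L_0v_0$ --- and then let $(L,v)$ be a maximal immediate algebraic extension of the henselization $(L_0(t))^h$; this $L$ is algebraically maximal, and since neither the transcendental extension, nor the henselization, nor an immediate extension alters the value group or the residue field, one checks that over $L$ we still have $c\notin L^p$ and that $L(c^{1/p})/L$ is still not algebraically maximal, witnessed by the immediate defect extension $L(c^{1/p^2})/L(c^{1/p})$. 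Now use $t$ to manufacture an Artin--Schreier extension $M=L(\vartheta)$ of degree $p$ with $\wp(\vartheta)$ an explicit element built from $c$ and a suitable unit formed from $t$, arranged so that the value of $\wp(\vartheta)$ is so negative that the Artin--Schreier operator behaves, to leading order, like Frobenius; this forces a root $\vartheta$ to approximate $c^{1/p}$ times that unit with high valuation-theoretic precision. Then $M$ has the same value group and residue field as $L(c^{1/p})$, the extension $M/L$ is ramified or residue-extending (hence non-immediate, with $e$ or $f$ equal to $p$ and defect $1$ over $L$, so that $L$ keeps its algebraic maximality), and the very approximation-type data that prevented $L(c^{1/p})$ from being algebraically maximal reappears inside $M$: there is an immediate separable extension $N=M(\vartheta')$ of degree $p$ and defect $p$. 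Consequently $N/L$ is separable of degree $p^2$ with defect $p$ (one may even take $N/L$ cyclic of degree $p^2$, using Artin--Schreier--Witt vectors of length two as the separable analogue of the tower $c^{1/p},c^{1/p^2}$), and $M$ is an intermediate field of degree $p$ over $L$ that is not algebraically maximal.

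For the characteristic $(0,p)$ part I would lift this picture to mixed characteristic by a composition of valuations. Let $L^{*}$ be the fraction field of the ring of Witt vectors $W(L)$ --- a complete discretely valued field of characteristic $0$ with residue field $L$ --- and equip it with the rank-$2$ valuation $w$ obtained by composing its $p$-adic valuation with $v$ on the residue field $L$; then $wL^{*}\cong\Z\times\Z$ lexicographically, $(L^{*},w)$ has characteristic $(0,p)$, and by the behaviour of algebraic maximality under composition of valuations ($L^{*}$ being complete, hence maximal, for the $p$-adic valuation, and $L$ being algebraically maximal for $v$) it is algebraically maximal. Let $N^{*}$ and $M^{*}$ be the unramified extensions of $L^{*}$, for the $p$-adic valuation, whose residue fields are $N$ and $M$; these are separable extensions of $L^{*}$ of degrees $p^2$ and $p$ with $M^{*}$ intermediate, and tracking the defect through the composition --- the $p$-adic layer contributes no defect, since its residue extension $N/L$ has degree $p^2$, so the $w$-defect of $N^{*}/L^{*}$ equals the $v$-defect of $N/L$, namely $p$ --- shows that $N^{*}/L^{*}$ is a separable defect extension of degree $p^2$ and defect $p$ and that $M^{*}$ is not algebraically maximal.

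The main obstacle is the middle step: designing $\wp(\vartheta)$ (together with its Witt companion, in the cyclic version) so that simultaneously the Artin--Schreier polynomial is irreducible of degree exactly $p$, the extension $M/L$ is genuinely non-immediate --- so that $L$ remains algebraically maximal and no defect is created at the bottom of the tower --- and $M$ nevertheless carries the same immediate-defect phenomenon that $(L_0,v_0)$ had, so that the defect extension $N/M$ actually exists. Verifying this amounts to controlling the value groups and residue fields at every stage and to checking the defect via the multiplicativity of the defect in towers and the Lemma of Ostrowski; this is feasible here because $(L_0,v_0)$ is discretely valued, so all computations are carried out against $\Z$, and because the algebraic maximality of $(L_0,v_0)$ is available to rule out the unwanted immediate extensions.
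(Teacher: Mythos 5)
Your characteristic $(p,p)$ construction cannot work as designed, and the obstruction is structural, not technical. The field $(L,v)$ you build is an algebraic extension of the henselization of $(L_0(t),v)$ by immediate extensions, so it is henselian with value group still $\Z$. But a henselian discretely valued field admits \emph{no} separable defect extensions at all: its completion $\hat L$ is a complete discretely valued field, hence maximal and therefore defectless, and since a henselian rank-1 field is relatively separably algebraically closed in its completion (Warner, Theorem 32.19, cited in the paper), any finite separable extension $N|L$ satisfies $[N\hat L:\hat L]=[N:L]$ while $N\hat L$ is the completion of $N$ and has the same value group and residue field as $N$; thus $d(N|L,v)=d(N\hat L|\hat L,v)=1$. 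So the Artin--Schreier (or Witt vector) element whose design you yourself single out as ``the main obstacle'' does not exist: no choice of $\wp(\vartheta)$ can produce the immediate defect extension $N|M$ you need over a field with value group $\Z$. This is precisely why the paper does not attempt a rank-1 example, states the rank-1 case as an open problem, and instead works in both characteristics with a composite valuation $v=w\circ v_0$ of rank 2: a henselian defectless discretely valued $(L,w)$ with residue field $L_0$, algebraic maximality of $(L,w\circ v_0)$ deduced from defectlessness of $w$ plus algebraic maximality of $(L_0,v_0)$, and separable degree-$p$ lifts $a,b$ of $a_0,b_0$ obtained from $X^p-c$ (characteristic $0$) or $X^p-rX-c$ with $wr>0$ (characteristic $p$), so that the defect of $(L(a,b)|L,v)$ is read off from the residue extension $L_0(a_0,b_0)|L_0$.

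Your mixed-characteristic step is in spirit the paper's argument (compose a discrete henselian/complete valuation of characteristic $0$ with the valuation on its residue field, lift the residue extensions, and track the defect through the composition), but as written it inherits the failure of the first part, since it presupposes the separable defect extension $N|L$ and the non-algebraically-maximal intermediate field $M$ downstairs; the repair is exactly the paper's: take the residue field to be $L_0$ itself and lift the purely inseparable generators $a_0,b_0$ to separable elements upstairs. Two smaller points: $\operatorname{Frac}(W(L))$ is a complete discrete valuation ring with residue field $L$ only when $L$ is perfect, which it is not here, so you would need a Cohen ring or the paper's construction of an extension of $(\Q,v_p)$ with prescribed residue field; and Example~3.1 does not provide a tower $L_0(c^{1/p})\subset L_0(c^{1/p^2})$ --- the defect lives in the $(p,p)$-extension $L_0^{1/p}=L_0(a_0,b_0)$ over $L_0(a_0)$ resp.\ $L_0(b_0)$, so the cyclic ``$c^{1/p},c^{1/p^2}$'' picture you start from is not what the basic example gives.
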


The valuations in the examples we give to prove this theorem have rank 2.
\sn
{\bf Open Problem:} Are there algebraically maximal fields of rank 1
which admit separable defect extensions?

\mn
%
%
\section{Preliminaries}                    \label{sectprel}
For a valued field $(K,v)$, we denote its value group by $vK$, its residue field by $Kv$, and its valuation ring by $\cO_K\,$.   
By $(L|K,v)$ we denote an extension $L|K$ with valuation $v$ on $L$, where $K$
is endowed with the restriction of $v$. In this case, there are induced embeddings of
$vK$ in $vL$ and of $Kv$ in $Lv$. The extension $(L|K,v)$ is called \bfind{immediate}
if these embeddings are onto. A valued field $(K,v)$ is called \bfind{algebraically
maximal} if it does not admit nontrivial immediate algebraic extensions, and it is
called \bfind{maximal} if it does not admit any nontrivial immediate extensions.

If $K$ is a field of characteristic $p>0$ and the extension $K|K^p$ is finite, then
there is $k\geq 0$ such that $[K:K^p]=p^k$; we then take the \bfind{$p$-degree of $K$}
(also called \bfind{degree of inseparability}) to be $k$.

A valued field $(K,v)$ is called \bfind{henselian} if each algebraic extension
$L|K$ is \bfind{unibranched}, that is, the extension of $v$ to $L$ is unique.

If $(L|K,v)$ is a finite unibranched extension, then by the Lemma of Ostrowski
(\cite[Corollary to Theorem 25, Section G, p.\ 78]{ZS}),
\begin{equation}                    \label{feuniq}
[L:K]\>=\> \tilde{p}^{\nu }\cdot(vL:vK)[Lv:Kv]\>,
\end{equation}
where $\nu$ is a nonnegative integer and $\tilde{p}$ the
\bfind{characteristic exponent} of $Kv$, that is, $\tilde{p}=\chara Kv$ if it is
positive and $\tilde{p}=1$ otherwise. The factor $d(L|K,v):=\tilde{p}^{\nu }$ is
the \bfind{defect} of the extension $(L|K,v)$.
If $d(L|K,v)=1$, then the extension $(L|K,v)$ is called \bfind{defectless};
otherwise we call it a \bfind{defect extension}. A valued field $(K,v)$ is a
\bfind{separably defectless field} if every finite unibranched separable extension
of $(K,v)$ is defectless, and a \bfind{defectless field} if every finite unibranched
extension of $(K,v)$ is defectless; note that this is always the case if
$\chara Kv=0$. An arbitrary valued field is called an \bfind{inseparably defectless
field} if every finite purely inseparable extension is defectless.

The defect is multiplicative: if $(L|K,v)$ and
$(M|L,v)$ are finite unibranched extensions, then
\begin{equation}         \label{pf}
\mbox{\rm d}(M|K,v) = \mbox{\rm d}(M|L,v)\cdot\mbox{\rm d}(L|K,v)
\end{equation}
(see \cite[Equation (4)]{Ku31}).

For a valued field $(K,v)$ and a finite field extension $L|K$, the \bfind{Fundamental
Inequality} (see (17.5) of \cite{End} or Theorem 19 on p.~55 of \cite{ZS}) states
that there are finitely many extensions of $v$ from $K$ to $L$, and
\begin{equation}                             \label{fundineq}
[L:K]\>\geq\>\sum_{i=1}^{\rm g} (v_i L:vK)[Lv_i:Kv]\>,
\end{equation}
where $v_1,\ldots,v_{\rm g}$ are the distinct extensions.

\pars
For a field $K$ of characteristic $p>0$, we will denote its perfect hull by
$K^{1/p^{\infty}}$. An arbitrary field extension $L$ of $K$ is called \bfind{separable}
if it is linearly disjoint over $K$ from $K^{1/p^{\infty}}$. If $L|K$ is algebraic, then
this definition coincides with the usual definitions of separability.
\begin{lemma}                               \label{charsep}
An arbitrary field extension $L$ of a field $K$ of characteristic $p>0$ is separable if
and only if it is linearly disjoint over $K$ from $K^{1/p}=K(a^{1/p}\mid a\in K)$.
\end{lemma}
\begin{proof}
If the extension $L$ of $K$ is separable, then by definition it is linearly disjoint
over $K$ from $K^{1/p^{\infty}}$ and thus also from $K^{1/p}$. For the converse,
assume that $L$ is not linearly disjoint over $K$ from $K^{1/p^{\infty}}$.
Then there are $K$-linearly independent elements $x_1,\ldots,x_n\in L$ which are not
$K^{1/p^{\infty}}$-linearly independent. Choose $m$ minimal such that there are
$y_1,\ldots,y_n\in K^{1/p^m}=K(a^{1/p^m}\mid a\in K)$ with $\sum_{i}
x_iy_i=0$. Then $m\geq 1$, and $x_1^{p^{m-1}},\ldots,x_n^{p^{m-1}}$ are
$K$-linearly independent. (Otherwise, we would have a non-trivial
relation $\sum_{i} x_i^{p^{m-1}} z_i=0$ with $z_i\in K$, hence $\sum_{i}
x_i z_i^{1/p^{m-1}}=0$, contradicting the minimality of $m$.)
But $x_1^{p^{m-1}},\ldots,x_n^{p^{m-1}}$ are not $K^{1/p}$-linearly
independent since $\sum_{i} x_i^{p^{m-1}} y_i^{p^{m-1}}=0$ with
$y_i^{p^{m-1}}\in K^{1/p}$. This proves that $L|K$ is not linearly
disjoint over $K$ from $K^{1/p}|K$.
\end{proof}

%
%
\section{A basic example}                    \label{sectbex}
\begin{example}                             \label{exdelon}
We consider $\F_p((t))$ with its $t$-adic valuation $v_t\,$. Since $\F_p((t))$ has
uncountable cardinality, while that of $\F_p(t)$ is countable, the extension
$\F_p((t))|\F_p(t)$ has infinite transcendence degree, we can choose elements
$x,y\in\F_p((t))$ which are algebraically independent over $\F_p(t)$. We set
\[
s\>:=\>x^p+ty^p\;\mbox{ \ and \ }\; K\>:=\>\F_p(t,s)\>.
\]
We note that $K^{1/p}=\F_p(t^{1/p},s^{1/p})=K(t^{1/p},s^{1/p})$.
The elements $t,s$ are algebraically independent over $\F_p$.
Consequently, the $p$-degree of $K$ is $2$. We define
$L_0$ to be the relative algebraic closure of $K$ in $\F_p((t))$. Then
$L_0(t^{1/p},s^{1/p})\subseteq L_0^{1/p}$.

We are going to show that $L_0$ is linearly disjoint over $K$ from $K^{1/p}$ and the
$p$-degree of $L_0$ is again $2$. Since the elements $1,t^{1/p},
\ldots, t^{(p-1)/p}$ are linearly independent over $\F_p((t))$, the same
holds over $L_0\,$. Hence, the elements $1,t,\ldots,t^{p-1}$ are linearly
independent over $L_0^p$. Now if $s^{1/p}$ were an element of $L_0(t^{1/p})$,
then it could be written in a unique way as an $L_0$-linear combination of
$1,t^{1/p},\ldots, t^{(p-1)/p}$, and $s$ could be
written in a unique way as an $L_0^p$-linear combination of $1,t,\ldots,
t^{p-1}$. But this is not possible since $s=x^p+ty^p$ and $x,y$ are
transcendental over $L_0\,$. Hence, $L_0$ is linearly disjoint over $K$ from $K^{1/p}$
and the $p$-degree of $L_0$ is again $2$; more precisely,
\[
L_0^{1/p}\>=\>L_0(t^{1/p},s^{1/p})\quad \mbox{ with } \quad [L_0^{1/p}:L_0]
\>=\>p^2\>.
\]
From Lemma~\ref{charsep} it follows that the extension $L_0|K$ is separable.

Since $s^{1/p}=x+t^{1/p}y\in \F_p(t^{1/p},x,y)\subset\F_p((t^{1/p}))$,
we have $L_0^{1/p}\subset \F_p((t^{1/p}))$. Extending $v_t$ to $\F_p((t^{1/p}))$, we
obtain that
\[
L_0^{1/p}v_t\>\subseteq\> \F_p((t^{1/p}))v_t\>=\>\F_p\>=\>Kv_t\>\subseteq\>
L_0v_t\>\subseteq\> L_0^{1/p}v_t\>.
\]
Further,
\[
v_t L_0^{1/p}\>\subseteq\> v_t\F_p((t^{1/p}))\>=\>\frac 1 p v_t \F_p((t))\>=\>
v_t \F_p(t^{1/p})\>\subseteq\> v_t L_0^{1/p}\>,
\]
hence equality holds everywhere. Moreover, $v_t L_0\subseteq
v_t \F_p((t))=v_t \F_p(t)\subseteq v_t L_0\,$, showing that $v_t L_0=v_t \F_p((t))$.
Consequently,
\[
(v_tL_0^{1/p}:v_tL_0)\>=\>p\quad \mbox{ and } \quad [L_0^{1/p}v_t:L_0v_t]\>=\>1
\]
As a relatively algebraically closed subfield of the henselian field $(\F_p((t)),
v_t)$, also $(L_0,v_t)$ is henselian. Thus the extension $(L_0^{1/p}|L_0,v_t)$ is
unibranched and consequently has defect $p$.

\pars
On the other hand, $\F_p((t))$ is the completion of $(L_0,v_t)$ since it is
already the completion of $\F_p(t)\subseteq L_0$. This shows that
$\F_p((t))$ is the unique maximal immediate extension of $L_0$ (up to
valuation preserving isomorphism over $L_0$). If $L_0$ would admit a proper
immediate algebraic extension $L_1$, then a maximal immediate extension
of $L_1$ would also be a maximal immediate extension of $L_0$ and would
thus be isomorphic over $L_0$ to $\F_p((t))$. But we have chosen $L_0$ to be
relatively algebraically closed in $\F_p((t))$. This proves that $(L_0,v)$
must be algebraically maximal. Hence
\[
(L_0(s^{1/p})|L_0,v_t)\quad \mbox{ and } \quad (L_0(t^{1/p})|L_0,v_t)\>,
\]
cannot be immediate and, being of prime degree, are therefore defectless. Thus the
defect of $L_0^{1/p}|L_0$ implies by multiplicativity (\ref{pf}) that both
\[
(L_0^{1/p}|L_0(s^{1/p}),v_t)\quad \mbox{ and } \quad (L_0^{1/p}|L_0(t^{1/p}),v_t)
\]
must have defect $p$. Consequently, $(L_0(s^{1/p},v_t)$ and $(L_0(t^{1/p},v_t)$
are not algebraically maximal.
\end{example}

\pars
We summarize the properties of this example, thereby adjusting the notation for
later use.
\begin{proposition}                            \label{L_0v_0}
There exists a discretely valued algebraically maximal field $(L_0,v_0)$ of
characteristic $p>0$ and purely inseparable defectless extensions $(L_0(a_0)|L_0,
v_0)$ and $(L_0(b_0)|L_0,v_0)$ of degree $p$ such that the unibranched extension
$(L_0(a_0,b_0)|L_0,v_0)$ of degree $p^2$ has defect $p$, as
$(v_0L_0(a_0,b_0):v_0L_0)=p$
and $[L_0(a_0,b_0)v_0:L_0v_0]=1$, and neither $(L_0(a_0),v_0)$ nor $(L_0(b_0),
v_0)$ is an algebraically maximal field. \qed
\end{proposition}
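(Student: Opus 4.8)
The plan is to recognise Proposition~\ref{L_0v_0} as a mere repackaging of Example~\ref{exdelon}. Concretely, I would take $(L_0,v_0)$ to be the field $(L_0,v_t)$ constructed there and set $a_0:=s^{1/p}$ and $b_0:=t^{1/p}$; with this dictionary every clause of the proposition has already been established in the example, so the proof reduces to checking that each assertion matches.

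First I would record the basic features of $(L_0,v_0)$. Since $\F_p(t)\subseteq L_0\subseteq\F_p((t))$ and $v_t\F_p((t))=\Z$, the value group $v_0L_0$ lies in $\Z$ and contains $v_0(t)=1$, so $v_0L_0=\Z$ and $(L_0,v_0)$ is discretely valued of characteristic $p$; it is henselian and algebraically maximal by the argument in the example, being relatively algebraically closed in its completion $\F_p((t))$, which is its unique maximal immediate extension. Next come the degree-$p$ extensions: from $a_0^p=s\in L_0$ and $b_0^p=t\in L_0$ we get $[L_0(a_0):L_0],[L_0(b_0):L_0]\in\{1,p\}$, and since the example records $L_0^{1/p}=L_0(t^{1/p},s^{1/p})$ with $[L_0^{1/p}:L_0]=p^2$, both must equal $p$ and moreover $L_0(a_0,b_0)=L_0^{1/p}$. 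Both extensions are purely inseparable of degree $p$; being of prime degree over the algebraically maximal, hence henselian, field $(L_0,v_0)$, neither can be immediate, so by the Lemma of Ostrowski~(\ref{feuniq}) (with characteristic exponent $p$) each has trivial defect, i.e.\ is defectless.

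For $(L_0(a_0,b_0)|L_0,v_0)=(L_0^{1/p}|L_0,v_t)$ I would note that it is unibranched because $L_0$ is henselian, has degree $p^2$, and the example computes $(v_0L_0(a_0,b_0):v_0L_0)=p$ and $[L_0(a_0,b_0)v_0:L_0v_0]=1$; hence by~(\ref{feuniq}) its defect equals $p$. Finally, to see that neither intermediate field is algebraically maximal, I would use multiplicativity~(\ref{pf}): $\mbox{\rm d}(L_0^{1/p}|L_0(a_0),v_0)=\mbox{\rm d}(L_0^{1/p}|L_0,v_0)/\mbox{\rm d}(L_0(a_0)|L_0,v_0)=p/1=p$, and a degree-$p$ extension with defect $p$ is forced by~(\ref{feuniq}) to be immediate; being also algebraic (indeed purely inseparable), it exhibits $(L_0(a_0),v_0)$ as admitting a nontrivial immediate algebraic extension, so $(L_0(a_0),v_0)$ is not algebraically maximal, and the same argument applies verbatim to $b_0$.

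I do not expect any genuine obstacle here: the statement is essentially a dictionary translation of Example~\ref{exdelon}. The only points needing a moment's care are fixing the correspondence $a_0=s^{1/p}$, $b_0=t^{1/p}$ and observing that $L_0(a_0,b_0)=L_0^{1/p}$, after which every numerical claim is forced by the Lemma of Ostrowski~(\ref{feuniq}) together with the multiplicativity of the defect~(\ref{pf}).
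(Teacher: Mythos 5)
Your proposal is correct and matches the paper exactly: the proposition is stated with \qed precisely because it is a summary of Example~\ref{exdelon}, with $a_0=s^{1/p}$, $b_0=t^{1/p}$, $L_0(a_0,b_0)=L_0^{1/p}$, and all clauses (defectlessness of the prime-degree subextensions via algebraic maximality, defect $p$ of the degree-$p^2$ extension via Ostrowski, and failure of algebraic maximality of the intermediate fields via multiplicativity of the defect) established there. Your dictionary translation and the small verifications you add are exactly the intended reading.
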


\begin{remark}
In \cite[Example 3.25]{Ku31} it is stated that the relative algebraic closure $L_0$
of $(K,v_t)$ in $\F_p((t))$ is a separable extension of $K$ and therefore is the
henselization of $K$. However, the proof on page 295 of \cite{Ku31} contains a gap
(but this does not affect the results on the previous pages). In fact, it turns out
that the use of the auxiliary field $\F_p(t,x,y)$ which contains $K$ is not
even necessary. Indeed, we have shown that the extension $L_0|K$ is separable.
Since by definition, $L_0$ is relatively closed in the henselian field
$\F_p((t))$, it is itself henselian and thus contains the henselization
$K^h$ of $K$. Now $\F_p((t))$ is the completion of $K^h$ since it is
already the completion of $\F_p(t)\subseteq K^h$. Since a henselian
field is relatively separable-algebraically closed in its completion
(cf.\ \cite{W}, Theorem~32.19) and $L_0|K^h$ is separable, it follows that $L_0=K^h$.

Nevertheless, the field $F:=\F_p(t,x,y)$ is useful for showing that the extension
$\F_p((t))|L_0$ is not separable. We have $s^{1/p}=x+t^{1/p}y\in F(t^{1/p})$.
Hence, $F.K^{1/p}=F(t^{1/p},s^{1/p})=F(t^{1/p})$ and $[F.K^{1/p}:F]=
[F(t^{1/p}):F] \leq p<p^2=[K^{1/p}:K]$, that is, $F$ is not linearly
disjoint over $K$ from $K^{1/p}$ and thus not separable, according to
Lemma~\ref{charsep}. Since $F\subset\F_p((t))$, it follows that $\F_p((t))|K$ is
not separable. Since $L_0|K$ is separable, this implies that $\F_p((t))|L_0$ is
not separable.
\end{remark}

\mn
%
%
\section{Examples with composite valuations}              \label{sectexrk2}

\begin{lemma}                                 \label{L,w}
Take any field $L_0$ of positive characteristic. There exist henselian defectless
discretely valued fields $(L,w)$ with residue field $L_0\,$.
They can be chosen such that either $\chara L=0$, or $\chara L=\chara L_0\,$.
\end{lemma}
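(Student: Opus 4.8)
The plan is to produce, in each of the two prescribed characteristics, a \emph{complete} discretely valued field with residue field $L_0$. Such a field is automatically henselian, and a complete discretely valued field is defectless, so this will suffice; the only real work is to arrange the mixed-characteristic case and to recall why completeness forces defectlessness.

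In equal characteristic I would put $p:=\chara L_0>0$ and take $L:=L_0((u))$, the field of formal Laurent series over $L_0$ in one variable, with $w$ the $u$-adic valuation. Then $(L,w)$ is complete, hence henselian; $wL=\Z$; $Lw=L_0$; and $\chara L=\chara L_0=p$. In mixed characteristic I would appeal to Cohen's structure theorem (see, e.g., \cite{ZS}) to obtain a complete discrete valuation ring $R$ of characteristic $0$ with maximal ideal $pR$ and residue field isomorphic to $L_0$ --- a Cohen ring of $L_0$; when $L_0$ happens to be perfect one may simply take $R=W(L_0)$, the Witt ring of $L_0$. Setting $L:=\operatorname{Frac}(R)$ with $w$ the $p$-adic valuation, $(L,w)$ is again complete, $wL=\Z$ and $Lw\isom L_0$, while now $\chara L=0$.

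For the defectlessness I would invoke the classical structure theory of complete discretely valued fields. If $(M|L,w)$ is a finite extension it is necessarily unibranched, since $L$ is henselian; lifting a basis of $Mw$ over $Lw$ to elements of $\cO_M$ and multiplying these by the powers $1,\pi,\dots,\pi^{e-1}$ of a uniformizer $\pi$ of $M$, where $e=(wM:wL)$, one gets $e\cdot[Mw:Lw]$ elements which form an $\cO_L$-basis of $\cO_M$: linear independence is the usual valuation-theoretic argument, and the spanning assertion is a successive-approximation argument relying on completeness of $L$. Hence $[M:L]=(wM:wL)[Mw:Lw]$, so by the Lemma of Ostrowski (\ref{feuniq}) the defect $d(M|L,w)$ is trivial, and $(L,w)$ is the desired henselian defectless discretely valued field.

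The step I expect to be the main obstacle is the mixed-characteristic existence statement: when $L_0$ is imperfect the ring of Witt vectors is not available, and one genuinely needs Cohen's structure theorem for complete local rings in its general form (equivalently, the existence of Cohen rings); by comparison the equal-characteristic construction and the verification of defectlessness are routine.
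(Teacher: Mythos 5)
Your proposal is correct, but it takes a genuinely different route from the paper's. The paper produces $(L,w)$ as a \emph{henselization} and quotes defectlessness theorems: in mixed characteristic it starts from an extension of $(\Q,v_p)$ with value group $v_p\Q$ and residue field $L_0$ (constructed via \cite[Theorem 2.14]{Ku21}), henselizes, and cites \cite[Theorem 8.32]{KuHab}; in equal characteristic it henselizes $(L_0(z),v_z)$ and invokes the Generalized Stability Theorem \cite[Theorem 1.1]{Ku29}. You instead use \emph{completions}: $L_0((u))$ in equal characteristic and the fraction field of a Cohen ring in mixed characteristic, with defectlessness proved by the classical integral-basis/successive-approximation argument for complete discretely valued fields. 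This is valid (completeness, not separability, is what makes the spanning argument go through), and in fact the paper itself records your mechanism as an alternative in the characteristic~$0$ case: a complete discretely valued field is maximal and hence a henselian defectless field (see the reference to \cite{Ku58}), so you could simply cite maximality instead of re-deriving the $\cO_L$-basis of $\cO_M$. The trade-off is in the existence step: your mixed-characteristic construction leans on Cohen's structure theorem (needed precisely because $L_0$ may be imperfect, as you note; Zariski--Samuel \cite{ZS} does contain this), whereas the paper avoids Cohen rings by building a suitable valued extension of $(\Q,v_p)$ with prescribed value group and residue field and then henselizing or completing. Both routes establish the lemma; yours is more classical and self-contained, the paper's leans on its own machinery and yields the (smaller) henselizations rather than complete fields.
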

\begin{proof}
For $\chara L=0$: Take an extension of $(\Q,v_p)$, where $v_p$ denotes the $p$-adic
valuation, with value group equal to $v_p\Q$ and residue field $L_0\,$. For the construction of such extensions, see \cite[Theorem 2.14]{Ku21}.
Let $(L,v_p)$
be the henselization of this field. Since $(L,v_p)$ is henselian discretely valued of
characteristic $0$, it is a defectless field by \cite[Theorem 8.32]{KuHab}.
Alternatively, one can also take the completion in place of the henselization; as
the valuation is still discrete, this field is maximal and therefore a henselian
defectless field (see the discussion at the beginning of Section 4 in \cite{Ku58}).
\sn
For $\chara L=\chara L_0\,$: Take an element $z$ transcendental over $L_0\,$, the
$z$-adic valuation $v_z$ on $L_0(z)$, and $(L,v_z)$ to be the henselization of
$(L_0(z),v_z)$. Then $(L,v_z)$ is henselian discretely valued with residue field
$L_0\,$, and by \cite[Theorem 1.1]{Ku29}, it is a defectless field.
\end{proof}

If $w$ is a valuation on a field $L$ and $v_0$ a valuation on the residue field $Lw$,
then one can define the \bfind{composition} $w\circ v_0$ which is a valuation on $L$
with residue field $(Lw)v_0$ as follows. If $\cO_w$ is the valuation ring of $w$ on $L$
and $\cO_{v_0}$ is the valuation ring of $v_0$ on $Lw$, then the preimage of $\cO_{v_0}$
under the residue map $\cO_w\ni a\mapsto aw$ is a valuation ring contained in $\cO_w$
and we let $v:=w\circ v_0$ be the valuation associated with it (it is unique up to
equivalence of valuations). Then, modulo canonical isomorphisms, $v_0(Lw)$ can be
viewed as a convex subgroup of $vL$ and $wL$ can be viewed as the quotient $vL/v_0(Lw)$.

Now let $(L'|L,v)$ be a finite extension. Then $w$ extends to $L'$ and $v_0$ to $L'w$
in such a way that $v_0(L'w)$ is the convex hull of $v_0(Lw)$ and $wL'$ can be
identified with $vL'/v_0(L'w)$. In this situation, we have:
\begin{lemma}                             \label{ind}
If $wL'=wL$, then $(vL':vL)=(v_0(L'w):v_0(Lw))$.
\end{lemma}
\begin{proof}
The equality $wL'=wL$ means that the natural embedding
\[
vL/v_0(Lw)\>=\>wL\>\hookrightarrow\> wL'\>=\> vL'/v_0(L'w)
\]
is onto. It follows that $vL'=vL+v_0(L'w)$, which in turn implies our statement.
\end{proof}

\begin{lemma}
Take $(L_0,v_0)$ as in Lemma~\ref{L_0v_0}, and $(L,w)$ as in Lemma~\ref{L,w}.
Set $v:=w\circ v_0\,$. Then $(L,v)$ is algebraically maximal.
\end{lemma}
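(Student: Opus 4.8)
The plan is to show that $(L,v)$ has no nontrivial immediate algebraic extension by analyzing the two "layers" of the composite valuation $v = w\circ v_0$ separately. Recall that for a composite valuation $v = w\circ v_0$ on $L$ with residue field $L_0$ (where $w$ is the "coarse" valuation with residue field $L_0$ and $v_0$ is induced on $L_0$), the value group $vL$ sits in an exact sequence $0 \to v_0L_0 \to vL \to wL \to 0$, and the residue field $Lv$ equals $L_0v_0$. So an immediate extension of $(L,v)$ must be immediate both for $wL$ and for the induced valuation on the residue field $L_0$.

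First I would set up the correspondence precisely: given a finite (algebraic) extension $(L'|L,v)$, restrict attention to the $w$-layer. Since $(L,w)$ is henselian defectless and discretely valued, $L'|L$ is defectless for $w$, and $(L'w : L_0) = [L':L]/(wL':wL) \cdot$ (defect $=1$), so if $(L'|L,v)$ is immediate then $wL' = wL$, hence $[L':L] = (L'w : L_0)$ and $L'|L$ is $w$-unramified; in particular $L'w$ is a finite extension of $L_0$ of degree $[L':L]$. Moreover $(L'w | L_0, v_0)$ must then be immediate, because $vL'=vL$ forces $v_0(L'w) = v_0L_0$ and $Lv = L'v$ forces $(L'w)v_0 = L_0v_0$.

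Next I would invoke algebraic maximality of $(L_0,v_0)$: the immediate extension $(L'w|L_0,v_0)$ is algebraic, hence trivial, so $L'w = L_0$ and therefore $[L':L] = 1$. This shows $(L,v)$ admits no nontrivial finite immediate algebraic extension; since every algebraic extension is a union of finite ones and immediacy passes to finite subextensions, $(L,v)$ is algebraically maximal.

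The main obstacle is making the first step airtight — specifically, verifying that $(L',w)$ is still henselian (so that Ostrowski's lemma and defectlessness apply to $L'|L$ for $w$), and that $v_0$ genuinely lifts to a valuation on $L'w$ with the stated compatibility. Henselianity of $(L',w)$ is automatic since $L'|L$ is algebraic and $(L,w)$ is henselian. The compatibility of value groups and residue fields under coarsening is the standard composite-valuation exact sequence, which I would quote rather than reprove. One should also note that no characteristic assumption is needed: the argument is uniform whether $\chara L = p$ or $\chara L = 0$, since in both cases $(L,w)$ is a henselian discretely valued defectless field by the previous lemma.
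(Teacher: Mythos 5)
Your proposal is correct and follows essentially the same route as the paper: use the decomposition of the composite valuation to see that $v$-immediacy forces $wL'=wL$ and equality of $v_0$-value group and residue field on the $w$-residue level, apply defectlessness of the henselian field $(L,w)$ to get $[L'w:L_0]=[L':L]$, and then invoke algebraic maximality of $(L_0,v_0)$. The only differences are cosmetic (direct argument with reduction to finite subextensions rather than the paper's proof by contradiction).
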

\begin{proof}
Since $(L,w)$ is henselian defectless, it is algebraically maximal. The composition
$w\circ v_0$ of two algebraically maximal valuations $w$ and $v_0$ is again
algebraically maximal (this is well known and the proof is straightforward).
\end{proof}

\begin{lemma}
Let $(L,v)$ be as in the previous lemma. Then there are elements $a,b$ in the
separable-algebraic closure of $L$ such that $[L(a):L]=[L(b):L]=p$, $L(a)w=L_0(a_0)$,
and $L(b)w=L_0(b_0)$.
\end{lemma}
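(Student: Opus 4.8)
The plan is to lift $a_0$ and $b_0$ one at a time, by exhibiting degree-$p$ polynomials over $L$ that are separable, whose reduction modulo $\cM_L$ is the minimal polynomial $X^p-a_0^p$, resp.\ $X^p-b_0^p$, of $a_0$, resp.\ $b_0$, over $L_0$, and then using henselianity together with the defectlessness of $(L,w)$ to pin down the residue field of the resulting extension. Write $c_0:=a_0^p\in L_0$ and $c_0':=b_0^p\in L_0$; since $L_0(a_0)|L_0$ and $L_0(b_0)|L_0$ are purely inseparable of degree $p$, neither $c_0$ nor $c_0'$ lies in $L_0^p$ (and $a_0,b_0\neq 0$). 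Choose $w$-units $c,c'\in\cO_L$ with $cw=c_0$ and $c'w=c_0'$, and fix a uniformizer $\pi$ of the discrete valuation $w$.

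If $\chara L=0$, I would take $a$ to be a root of $X^p-c$ and $b$ a root of $X^p-c'$. These polynomials are separable because $\chara L=0$, and irreducible over $L$: if $c=d^p$ with $d\in L$, then $w(d)=0$ since $w(c)=0$, hence $c_0=(dw)^p\in L_0^p$, a contradiction, and likewise for $c'$; so $c,c'\notin L^p$ and the polynomials are irreducible. Thus $[L(a):L]=[L(b):L]=p$. Since $a^p=c$ with $c$ a unit, $a$ is a $w$-unit with $(aw)^p=c_0$, so $aw=a_0$ (the residue characteristic being $p$, $p$-th roots are unique); similarly $bw=b_0$.

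If $\chara L=p$, the polynomial $X^p-c$ is inseparable, so I would instead use the trick of perturbing by a monomial of positive value (as in \cite[Example 3.18]{Ku31}): let $a$ be a root of $f(X):=X^p-\pi X-c$ and $b$ a root of $X^p-\pi X-c'$. Since $f'(X)=-\pi\neq 0$, the polynomial $f$ is separable, so $a$ and $b$ lie in the separable-algebraic closure of $L$. To see $[L(a):L]=p$ I would show $f$ is irreducible. First, for any root $\alpha$ of $f$ in an algebraic closure of $L$ with $w$ extended, the values $w(\alpha^p)=p\,w\alpha$, $w(\pi\alpha)=1+w\alpha$, $w(c)=0$ show that $w\alpha$ can be neither negative (then $p\,w\alpha$ would be the strict minimum of the three, contradicting $\alpha^p-\pi\alpha-c=0$) nor positive (then $w(c)=0$ would be the strict minimum), so every root is a $w$-unit; and since $w(\pi\alpha)>0$ we get $(\alpha w)^p=c_0$, i.e.\ $\alpha w=a_0$. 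Second, if $g\in L[X]$ were a monic factor of $f$ of degree $k$ with $1\le k\le p-1$, then $g(0)$ would be $\pm$ a product of $k$ roots of $f$, hence a $w$-unit with residue $\pm a_0^k\in L_0$; as $\gcd(k,p)=1$ this would force $a_0\in L_0$, a contradiction. Hence $[L(a):L]=[L(b):L]=p$ with $aw=a_0$ and $bw=b_0$.

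In both cases it remains to identify the residue fields. Since $(L,w)$ is henselian, $(L(a)|L,w)$ is unibranched, so by the Lemma of Ostrowski (\ref{feuniq}) and the fact that $(L,w)$ is a defectless field, $p=[L(a):L]=(wL(a):wL)\cdot[L(a)w:Lw]$. On the other hand $L_0(a_0)=Lw(aw)\subseteq L(a)w$ with $[L_0(a_0):L_0]=p$, so comparison forces $[L(a)w:Lw]=p$ and therefore $L(a)w=L_0(a_0)$; the same computation gives $L(b)w=L_0(b_0)$. The one genuinely delicate point is the characteristic-$p$ case: one must avoid the obvious but inseparable choice $X^p-c$ and instead verify that the perturbation $X^p-\pi X-c$ is simultaneously separable, irreducible (so that the extension has degree exactly $p$), and has all roots reducing to $a_0$ — the Newton-polygon observation that every root is a $w$-unit and the constant-term/coprimality argument for irreducibility are where the actual work lies. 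Everything afterwards is a formal consequence of henselianity and defectlessness.
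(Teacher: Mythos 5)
Your construction is exactly the paper's: lift $a_0^p$, $b_0^p$ to units $c,c'\in L$, take $p$-th roots in characteristic $0$ and roots of $X^p-\pi X-c$ (the paper's $X^p-rX-c$ with $wr>0$) in characteristic $p$, check that the residues are $a_0,b_0$, and then identify degree and residue field via henselianity and the fundamental inequality/Ostrowski; the argument is correct. The only real difference is that your direct irreducibility proofs (the Newton-polygon and constant-term/coprimality arguments) are redundant: once $aw=a_0$ is established, the chain $p\geq[L(a):L]\geq[L(a)w:Lw]\geq[L_0(a_0):L_0]=p$ already forces $[L(a):L]=p$, which is how the paper avoids discussing irreducibility (and defectlessness of $(L,w)$) altogether.
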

\begin{proof}
Take $c,d\in L$ such that $cw=a_0^p\in L_0$ and $dw=b_0^p\in L_0$. If $\chara L=0$,
then take $a$ to be a $p$-th root of $c$ and $b$ to be a $p$-th root of $d$. If
$\chara L=\chara L_0
=p$, then take $a$ to be a root of the polynomial $X^p-rX-c$ and $b$ to be a root of
the polynomial $X^p-rX-d$ for some $r\in L\setminus\{0\}$ with $wr>0$. Then in both
cases, $a$ and $b$ are separable over $L$ with $aw=a_0$ and $bw=b_0$. It follows that
\[
p\>\geq\>[L(a):L]\>\geq\>[L(a)w:Lw]\>\geq\>[L_0(a_0):L_0]\>=\>p\>.
\]
Hence equality holds everywhere, which proves that $[L(a):L]=p$ and $L(a)w=L_0(a_0)$.
The proof for $b$ in place of $a$ is similar.
\end{proof}

\pars
Now we are ready for the
\sn
{\it Proof of Theorem~\ref{amnotidl2}:} We shall prove that the valued field $(L,v)$
of the previous lemma has the properties stated in Theorem~\ref{amnotidl2}. As the
extensions $L(a)|L$ and $L(b)|L$ are separable, so is the extension $L(a,b)|L$.
Since $a_0,b_0\in L(a,b)w$, we have
\[
p^2\>\geq\>[L(a,b):L]\>\geq\>[L(a,b)w:Lw]\>\geq\> [L_0(a_0,b_0):L_0]\>=\>p^2\>,
\]
hence equality holds everywhere, showing that $[L(a,b):L]=p^2$ and $L(a,b)w=
L_0(a_0,b_0)$, so that $L(a,b)v=L_0(a_0,b_0)v_0\,$. On the other hand, $wL(a,b)=wL$
by the Fundamental Equality~(\ref{fundineq}) since $[L(a,b):L]=[L(a,b)w:Lw]$.
Further, by Proposition~\ref{L_0v_0}, $(v_0L_0(a_0,b_0):v_0L_0)=p$ and
$[L_0(a_0,b_0)v_0:L_0v_0]=1$. Hence
\[
(vL(a,b):vL)\>=\>(v_0(L(a,b)w):v_0(Lw))\>=\>(v_0L_0(a_0,b_0):v_0L_0)=p\>,
\]
where we have used Lemma~\ref{ind}, and
\[
[L(a,b)v:Lv]=[L_0(a_0,b_0)v_0:L_0v_0]\>=\>1\>.
\]
So the extension $(L(a,b)|L,v)$ has defect $p$.

\pars
Finally, $[L(a):L]=p$ and since $wL(a,b)=wL$, we also have $wL(a)=wL$.
Hence by Lemma~\ref{ind}, $(vL(a):vL)=(v_0(L(a)w):v_0(Lw))=
(v_0L_0(a_0):v_0L_0)=p$, showing that the
extension $(L(a)|L,v)$ is defectless. Since the defect is multiplicative, it
follows that $(L(a,b)|L(a),v)$ has defect $p$, which shows that $(L(a),v)$ is not
algebraically maximal. The same proof works for $b$ in place of $a$.
\qed

\bn

\end{document}